\title{On Natural Deduction for Herbrand Constructive Logics III: The Strange Case of the Intuitionistic Logic of Constant Domains}
\author{Federico Aschieri\thanks{Funded by the Austrian Science Fund FWF START grant Y 544--N23}\\ Institut f\"ur Logic and Computation\\
Technische Universit\"at Wien }
\providecommand{\urlalt}[2]{\href{#1}{#2}}
\providecommand{\urlalt}[2]{\href{#1}{#2}}
\providecommand{\arxiv}[1]{ArXiv:\urlalt{https://arxiv.org/abs/#1}{#1}}
\newcommand{\Language}                 {\mathcal{L}}
\newcommand{\pair}[2]{\langle #1, #2 \rangle}
\newcommand{\proj}[2]{#2 \pi_{#1}}
\newcommand{\inj}[2]{\upiota_{#1}(#2)}
\newcommand{\case}[3]{#1[#2, #3]}
\newcommand{\enc}[2]{(#1, #2)}
\newcommand{\dest}[3]{#1[(\alpha,#2).#3]}
\newcommand{\IL}{\mathsf{IL}}
\newcommand{\HA}{\mathsf{HA}}
\newcommand{\efq}[2]{{\mathsf{efq}_{#1}\, #2}}
\newcommand{\prop}[1]    {{\mathsf{#1}}}
\newcommand{\rec}{\mathsf{R}}
\newcommand{\suc}{\mathsf{S}}
\newcommand{\D}         {{\mathcal{D}}}
\newcommand{\CD}      {{\mathsf{CD}}}
\newcommand{\dum}   {{\mathsf{0}}}
\newcommand{\ILB}     {{\mathsf{IL}^{\bot}}}
\newcommand{\trans}[1] {{#1}^{*}}
\newcommand{\dlinea}{\leavevmode\hrule\vspace{1pt}\hrule\mbox{}}
\newtheorem{theorem}{Theorem}
\newtheorem{proposition}{Proposition}
\newtheorem{definition}{Definition}
\begin{document}
\maketitle
 \begin{abstract}
 The logic of constant domains is intuitionistic logic extended with the so-called forall-shift axiom, a classically valid statement which implies the excluded middle over decidable formulas. Surprisingly, this logic is constructive and so far this has been proved by cut-elimination for ad-hoc sequent calculi. 
 Here we use the methods of natural deduction and Curry-Howard correspondence to provide a simple computational interpretation of the logic.
\end{abstract}

\section{Introduction}

This is the third in a series of papers about natural deduction for Herbrand constructive logics, which we defined to be intermediate logics satisfying Herbrand's Theorem for \emph{every} existential statement \cite{AschieriDummett, AschieriManighetti}. Indeed, intermediate logics prove intuitionistically as well as classically valid theorems, yet they often possess a strong constructive flavour. Our aim is to provide computational interpretations of them, using the Curry-Howard correspondence. Results about constructiveness of intermediate logics are scattered in the literature, are proved with diverse techniques and by means of a variety of logical systems. In contrast, natural deduction offers a systematic approach:   new axioms can be immediately translated into intuitive inference rules and one has only to provide computational readings of them.  This may require some ingenuity and devising termination proofs may be difficult, but the natural deduction framework offers a proven methodology that often leads to full success. The hope is to provide a general theory explaining why many intermediate logics are turning out to be Herbrand constructive.

With this general goal in mind, in this paper we give yet another example: it is the turn of the intuitionistic logic of constant domains $\CD$. It arises from a very natural Kripke-style semantics, which was proposed by Grzegorczyk \cite{Grzegorczyk} as philosophically plausible interpretation of intuitionistic logic. Technically, one only allows Kripke models with constant domains, that is, the set of individuals of the interpretation does not grow with the state of knowledge. This indeed is a natural conditions, which avoids some mathematical oddities of Kripke semantics, like the set of integers growing when interpreting intuitionistic Arithmetic \cite{Troelstra2}.

$\CD$ is obviously Herbrand constructive, because it is, in fact, fully constructive: if $\CD \vdash A\lor B$, then $\CD\vdash A$ or $\CD\vdash B$; if $\CD\vdash\exists\alpha\, A$, then $\CD\vdash A[m/\alpha]$ for some term $m$ of the language. Therefore, it must admit a natural computational interpretation. However, the constructiveness of $\CD$ was proved surprisingly late  by means of a rather complex sequent calculus \cite{Kashima}. Indeed, Fitting \cite{Fitting} described the problem of finding a \emph{simple} deductive system for $\CD$ as being  ``\emph{a mildly nagging problem for some time}''. Fitting did indeed propose a conceptually simple nested sequent calculus for $\CD$. However, the disjunction property just does not follow from the cut-elimination theorem, because the disjunction inference rule is the one from classical logic. 

$\CD$ indeed can be formalized as intuitionistic logic extended with a classically quite strong principle, the constant domain axiom:
$$\forall \alpha (A\lor B)\rightarrow \forall \alpha A\, \lor\, B (\mbox{ $\alpha$ not occurring in $B$})$$
As noticed by Fitting \cite{FittingMistake}, it implies the excluded middle over decidable formulas:
$$\forall \alpha (A(\alpha)\lor \lnot A(\alpha))\rightarrow \forall \alpha A(\alpha)\, \lor\, \exists \alpha \lnot A(\alpha)$$ 
This is rather puzzling. How can $\CD$ possibly be constructive? One expects $\CD$ to be Mr. Hyde and it turns out to be Dr. Jekyll.

 Our natural deduction and Curry-Howard interpretation for $\CD$ will provide a simple explanation. Namely, when proving a disjunction or an existential statement, the constant domain axiom fully complies the Brouwer-Heyting-Kolmogorov semantics: a proof of $\forall \alpha (A(\alpha)\lor B)$ does yield a proof of $\forall \alpha\, A(\alpha)$ or a proof of $B$. The reason, as we shall show, is the purely logical setting. The proof of $A(\alpha)\lor B$ cannot depend on $\alpha$, that is,  must be uniform. There is not much choice then: either there is a uniform proof of $A(\alpha)$ and thus of $\forall \alpha A(\alpha)$ or there is a proof of $B$. This in sharp contrast with actual mathematical proofs, for example in Arithmetic. A proof of $\forall \alpha (A(\alpha)\lor B)$, may very well deliver a proof of $A(\alpha)$ for some values of $\alpha$ and proof of $B$ for others. As a consequence, we shall show that as a result, Heyting Arithmetic \cite{Troelstra} extended with the constant domain axiom is not constructive.

At the current stage in this series of papers, we can already draw a philosophical lesson. \emph{Analiticity}, or better its formalization as \emph{Subformula Property},  is neither a sufficient nor a necessary condition for a proof system to be meaningful. Nested sequents \cite{Fitting} are simple and analytic, but do not deliver the existence and disjunction properties. The sequents in \cite{Kashima}  do deliver these properties, they are analytic, but not complicated. Our present and former natural deductions \cite{AschieriDummett, AschieriManighetti} do appear simple, do deliver the properties, but rather different and more elaborated ideas are needed to make them analytic.

 \subsection{Plan of the Paper}
 In \Cref{sec:ND}, we present our simple computational interpretation of $\CD$. We prove its constructivness and its strong normalization, by means of a simple translation into intuitionistic lambda terms. In \Cref{sec:arithmetic}, we focus on Heyting Arithmetic extended with the constant domain axiom and show it cannot be constructive.

\section{Natural Deduction and Curry-Howard for $\CD$}
\label{sec:ND}
In this section we describe a standard natural deduction system for intuitionistic first-order logic, with a term assignment based on the Curry-Howard correspondence (e.g. see \cite{Sorensen}), and add on top of it an operator which formalizes the constant domain axiom. We shall then immediately prove the Subject Reduction Theorem, stating that the reduction rules preserve the type and thus represent sound proof transformations. 
Afterwards, we give a simple proof of strong normalization, by encoding proof terms of $\CD$ into intuitionistic proof terms. Finally, we shall inspect normal forms and based on their forms, we shall conclude that $\CD$ is constructive.


We start with the standard first-order language of formulas

\begin{definition}[Language of $\CD$]\label{definition-languagear}
The language $\Language$ of $\CD$ is defined as follows.
\begin{enumerate}

\item
The \textbf{terms} of $\Language$ are inductively defined as either variables $\alpha, \beta,\ldots$ or constants  $\mathsf{c}$ or expressions of the form $\mathsf{f}(m_{1}, \ldots, m_{n})$, with $\mathsf{f}$ a function constant of arity $n$ and $m_{1}, \ldots, m_{n}\in\Language$.  

\item
There is a countable set of \textbf{predicate symbols}. The \emph{atomic formulas} of $\Language$ are all the expressions of the form $\mathcal{P}(m_{1}, \ldots, m_{n})$ such that  $\mathcal{P}$ is a predicate symbol of arity $n$ and $m_{1}, \ldots, m_{n}$ are terms of $\Language$. We assume to have a $0$-ary predicate symbol $\bot$ which represents falsity.

\item
The \textbf{formulas} of $\Language$ are built from atomic formulas of $\Language$ by the logical constants  $\lor,\land,\rightarrow, \forall,\exists$, with quantifiers ranging over  variables $\alpha, \beta, \ldots$: if $A, B$ are formulas, then $A\land B$, $A\lor B$, $A\rightarrow B$, $\forall \alpha\, A$, $\exists \alpha\, B$ are formulas. The  logical negation $\lnot A$ can be introduced, as usual, as a shorthand  for the formula $A\rightarrow\bot$.

\end{enumerate}

\end{definition}

Next, we consider the usual natural deduction system for intuitionistic first-order logic \cite{Prawitz, Sorensen}, to which we add a constant $\D^{I}: I$ for every instance $I$ of the constant domain axiom.  The resulting Curry-Howard system is called $\CD$ and is presented in \cref{fig:system}. 
As usual, the notation $\CD \vdash t: A$ stands for $\vdash t: A$ and means provability without assumptions.
The reduction rules for $\CD$ are presented in \cref{fig:red-ilmp} and include the ordinary ones of lambda calculus. The reduction rules for $\D$ are extremely simple as well and their task is just extracting what is already implicit in $\D$ argument: either a proof term for $\forall\alpha A$ or one for $B$.  As usual, we  omit types of variables whenever they do not matter; for arbitrary terms $t,u$, the relation $t\mapsto u$ holds whenever $u$ is obtained from $t$ by applying a reduction rule inside of $t$.


 \begin{figure*}[!htb]
 
\footnotesize{
\dlinea

\begin{description}



\item[Axioms] 
$\begin{array}{c}    x^A: A
\end{array}\ \ \ \ $
\\

\item[Conjunction] 
$\begin{array}{c}  u:  A\ \ \ \  t: B\\ \hline  \langle
u,t\rangle:
A\wedge B
\end{array}\ \ \ \ $
$\begin{array}{c}  u: A\wedge B\\ \hline u\,\pi_0: A
\end{array}\ \ \ \ $
$\begin{array}{c}   u: A\wedge B\\ \hline  u\,\pi_1 : B
\end{array}$
\vskip 0.15in
\item[Implication] 
$\begin{array}{c}   t: A\rightarrow B\ \ \  u:A \\ \hline
 t u:B
\end{array}\ \ \ \ $
 \AxiomC{$[x^{A}: A]$}
\noLine
\UnaryInfC{$\vdots$}
\noLine
\UnaryInfC{$u: B$}
\UnaryInfC{$\lambda x^{A} u: A\rightarrow B$}
\DisplayProof
\vskip 0.15in
\item[Disjunction Introduction] 
$\begin{array}{c} u: A\\ \hline  \inj{0}{u}: A\vee B
\end{array}\ \ \ \ $
$\begin{array}{c}   u: B\\ \hline \inj{1}{u}: A\vee B
\end{array}$
\vskip 0.15in

\item[Disjunction Elimination] 
\AxiomC{$u: A\lor B$}
\AxiomC{$[x^{A}: A]$}
\noLine
\UnaryInfC{$\vdots$}
\noLine
\UnaryInfC{$w_{1}: C$}
\AxiomC{$[y^{B}: B]$}
\noLine
\UnaryInfC{$\vdots$}
\noLine
\UnaryInfC{$w_{2}: C$}
\TrinaryInfC{$u\, [x^{A}.w_{1}, y^{B}.w_{2}]: C$}
\DisplayProof
\vskip 0.15in

\item[Universal Quantification] 
$\begin{array}{c} u:\forall \alpha\, A\\ \hline   u m: A[m/\alpha]
\end{array}\ \ \ $
$\begin{array}{c}   u: A\\ \hline  \lambda \alpha\, u:
\forall \alpha\, A
\end{array}$\\
\vskip 0.1in
where $m$ is any term of  the language $\Language$ and $\alpha$ does not occur
free in the type $B$ of any free variable  $x^{B}$ of $u$.\\
\vskip 0.1in

\item[Existential Quantification] 
$\begin{array}{c}  u: A[m/\alpha]\\ \hline  (
m,u):
\exists
\alpha\, A
\end{array}\ \ \ $
\AxiomC{$u: \exists \alpha\, A$}
\AxiomC{$[x^{A}: A]$}
\noLine
\UnaryInfC{$\vdots$}
\noLine
\UnaryInfC{$t: C$}
\BinaryInfC{$u\, [(\alpha, x^{A}). t]: C$}
\DisplayProof
\\\vskip 0.1in
where $\alpha$ is not free in $C$ nor in the type $B$ of any free variable of $t$.\\
\vskip 0.15in

\item[Constant Domain Axiom]
$\D^{\footnotesize{\forall \alpha (A\lor B)\rightarrow \forall \alpha A\, \lor\, B }} : \forall \alpha (A\lor B)\rightarrow \forall \alpha A\, \lor\, B  $,
\\\vskip 0.1in
 where $\alpha$ does not occur in $B$.
\vskip 0.15in
\item[Ex Falso Quodlibet] 
$\begin{array}{c}  \Gamma \vdash u: \bot \\ \hline \Gamma\vdash  \efq{P}{u}:
P
\end{array}$\\
with $P$ atomic.

\end{description}
}

\dlinea
\caption{Term Assignment Rules for $\CD$}\label{fig:system}
\end{figure*}

\begin{figure*}[!htb]
\dlinea

\begin{description}

 \item[Reduction Rules for $\CD$]
 \[(\lambda x. u)t\mapsto u[t/x]\]
 \[ (\lambda \alpha. u)m\mapsto u[m/\alpha]\]
  \[ \proj{i}{\pair{u_0}{u_1}}\mapsto u_i, \mbox{ for i=0,1}\]
 \[\case{\inj{i}{u}}{x_{1}.t_{1}}{x_{2}.t_{2}}\mapsto t_{i}[u/x_{i}], \mbox{ for i=0,1} \]
 \[\dest{\enc{m}{u}}{x}{v} \mapsto v[m/\alpha][u/x], \mbox{ for each term $m$ of $\mathcal{L}$} \]
 \[\D\, (\lambda \alpha\, \inj{0}{u}) \mapsto \inj{0}{\lambda \alpha\, u}\]
  \[\D\, (\lambda \alpha\, \inj{1}{u}) \mapsto \inj{1}{u[\dum/\alpha]}, \mbox{ where $\dum$ is a fixed constant of $\mathcal{L}$} \]  
\end{description}
\dlinea
\caption{Reduction Rules for $\CD$}\label{fig:red-ilmp}
\end{figure*}

$\CD$ with the reduction rules in figure \cref{fig:red-ilmp} enjoys the Subject Reduction Theorem.
\begin{theorem}[Subject Reduction]\label{subjectred}
If $ t : C$ and $t \mapsto u$, then $ u : C$. Moreover, the free proof-term variables of $u$ are among those of $t$.
\end{theorem}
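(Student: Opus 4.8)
The plan is to proceed by a standard structural induction on the definition of the one-step reduction relation $t \mapsto u$. Since $t \mapsto u$ means that $u$ arises from $t$ by rewriting a single redex somewhere inside $t$, the inductive step for the congruence closure (rewriting inside a subterm) is routine: if the redex sits under a term constructor such as $\lambda x^A(\cdot)$, $\langle \cdot, \cdot\rangle$, application, $\inj{i}{\cdot}$, a case split, $\D\,(\cdot)$, and so on, I would simply invoke the induction hypothesis on the immediate subterm being rewritten and reapply the corresponding typing rule from \cref{fig:system}. In each such case the preservation of free variables is immediate, since rewriting inside a subterm cannot introduce variables that were not already present. So the real content lies in the base cases, one for each reduction rule listed in \cref{fig:red-ilmp}.

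For the base cases I would treat each rewrite rule in turn, inverting the typing derivation of the left-hand side to read off the types of its components and then rebuilding a derivation of the right-hand side with the same type. The purely intuitionistic redexes are classical: for $(\lambda x^A.u)t \mapsto u[t/x]$ I would use the fact that $\lambda x^A.u : A\to B$ forces $u:B$ under the hypothesis $x^A:A$ and $t:A$, together with a substitution lemma stating that replacing $x$ by a term of the right type preserves typing; the cases for $\beta$-reduction on $\forall$, the projections $\pi_i$, the case-elimination on $\inj{i}{\cdot}$, and the existential destructor $\dest{\enc{m}{u}}{x}{v}$ are analogous, the last two also relying on a term-substitution lemma for the first-order variable $\alpha$. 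For all of these the free-variable claim follows because substitution can only remove the bound/eliminated variable and introduce variables already occurring in the substituted term, all of which are free in $t$.

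The genuinely novel cases are the two reduction rules for the constant domain operator $\D$, and this is where I expect the main obstacle to lie. Consider $\D\,(\lambda\alpha.\inj{0}{u}) \mapsto \inj{0}{\lambda\alpha.u}$. Inverting the typing, the argument $\lambda\alpha.\inj{0}{u}$ has type $\forall\alpha(A\lor B)$, so $\inj{0}{u}:A\lor B$ with the eigenvariable side condition that $\alpha$ is not free in the type of any free variable of $u$, whence $u:A$; the whole term then has type $\forall\alpha A\lor B$. I must check that $\inj{0}{\lambda\alpha.u}$ carries this same type, which requires $\lambda\alpha.u:\forall\alpha A$ — and here I must verify that the very same eigenvariable condition licensing the original $\lambda\alpha$ abstraction still licenses the new one, so that the $\forall$-introduction is legitimate. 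The subtle case is $\D\,(\lambda\alpha.\inj{1}{u}) \mapsto \inj{1}{u[\dum/\alpha]}$, where $\alpha$ does not occur in $B$ by the side condition on $\D$: from $\inj{1}{u}:A\lor B$ I get $u:B$, and since $\alpha\notin B$ the substitution $u[\dum/\alpha]$ still has type $B$, so $\inj{1}{u[\dum/\alpha]}:\forall\alpha A\lor B$. The delicate points are confirming that the eigenvariable constraints attached to $\lambda\alpha$ are exactly what guarantees $\alpha$ does not escape into free hypotheses, and tracking that substituting $\dum$ for $\alpha$ neither changes the type $B$ nor creates any new free proof-term variable. I would therefore state and prove the two substitution lemmas (for proof-term variables and for first-order term variables, the latter recording that it preserves types up to the relevant substitution and does not enlarge the free-variable set) as auxiliary facts before the induction, and then the $\D$-cases reduce to careful bookkeeping of these side conditions.
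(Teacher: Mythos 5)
Your proposal is correct and follows essentially the same route as the paper: the intuitionistic redexes are dispatched by the standard substitution lemmas (the paper simply cites the literature for these), and the real work is the two $\D$-reductions, which you handle exactly as the paper does — inverting the typing of $\D\,(\lambda\alpha.\inj{i}{u})$, reusing the eigenvariable condition to justify $\lambda\alpha.u:\forall\alpha A$ in the first case, and using $\alpha\notin B$ together with the first-order substitution lemma to get $u[\dum/\alpha]:B$ in the second.
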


\begin{proof}
Every reduction rule for intuitionistic logic satisfies the statement: see \cite{Sorensen}.  We thus have only to check that the reductions associated to the constants $\D$ preserve the type as well.

The first reduction is
$$\D\, (\lambda \alpha\, \inj{0}{u}) \mapsto \inj{0}{\lambda \alpha\, u}$$
where the type derivation of $\D\, (\lambda \alpha\, \inj{0}{u})$ has the following shape:
\begin{prooftree}
\AxiomC{$ \D: \forall \alpha (A\lor B)\rightarrow \forall \alpha A \lor B$}
\AxiomC{$ u: A$}
\UnaryInfC{$ \inj{0}{u}: A\lor B$}
\UnaryInfC{$ \lambda \alpha\, \inj{0}{u}: \forall \alpha (A\lor B)$}
\BinaryInfC{$ \D\, (\lambda \alpha\, \inj{0}{u}): \forall \alpha A\lor B$}
\end{prooftree}
with $\alpha$ not occurring in the types of the free variables of $\inj{0}{u}$. Therefore, the following derivation is correct 
\begin{prooftree}
\AxiomC{$ u: A$}
\UnaryInfC{$ \lambda\alpha\,u : \forall\alpha A$}
\UnaryInfC{$ \inj{0}{\lambda \alpha\, u}: \forall \alpha A\lor B$}
\end{prooftree}
which is what we wanted to show.

The second reduction is
$$\D\, (\lambda \alpha\, \inj{1}{u}) \mapsto \inj{1}{u[\dum/\alpha]}$$
where the type derivation of $\D\, (\lambda \alpha\, \inj{1}{u})$ has the following shape:
\begin{prooftree}
\AxiomC{$ \D: \forall \alpha (A\lor B)\rightarrow \forall \alpha A \lor B$}
\AxiomC{$u: B$}
\UnaryInfC{$ \inj{1}{u}: A\lor B$}
\UnaryInfC{$ \lambda \alpha\, \inj{1}{u}: \forall \alpha (A\lor B)$}
\BinaryInfC{$\D\, (\lambda \alpha\, \inj{1}{u}): \forall \alpha A\lor B$}
\end{prooftree}
where $\alpha$ does not occur neither in the types of the free variables of $\inj{1}{u}$ nor in $B$. Now, for every $ t: C$, term $m\in\mathcal{L}$ and first-order variable $\beta$,  one has  $ t[m/\beta]: C[m/\beta]$ (see \cite{Sorensen}). Therefore, the following derivation is correct 
\begin{prooftree}
\AxiomC{$ u[\dum/\alpha]: B$}
\UnaryInfC{$ \inj{1}{u[\dum/\alpha]}:  \forall \alpha A\lor B$}
\end{prooftree}
which is what we wanted to show.
  
\end{proof}

 In order to derive the constructiveness of $\CD$, we shall just have to inspect the normal forms of proof terms. Our main argument, in particular, will use the following well-known syntactic characterization of the shape of proof terms.
\begin{proposition}[Head of a Proof Term]
  \label{theorem:head-form}
  Every proof-term of $\CD$ is of the form 
\[\lambda z_1 \dots \lambda z_n.\, r\, u_1 \dots u_k \]
where 
\begin{itemize}
\item $r$ is either a variable or a constant or a term corresponding to an introduction rule: $\lambda x\, t$, $\lambda \alpha\, t$, $\pair{t_1}{t_2}$, $\inj{i}{t}$, $\enc{m}{t}$
\item $u_1, \dots u_k$ are either proof terms, first order terms, or one of the following expressions corresponding to elimination rules: $\proj{i}{}$, $\case{}{x.w_1}{y.w_2}$, $\dest{}{x}{t}$.
\end{itemize}
\end{proposition}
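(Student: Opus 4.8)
The plan is to argue by structural induction on the proof term $t$, showing that each term constructor of \cref{fig:system} preserves the claimed \emph{head form} $\lambda z_1 \dots \lambda z_n.\, r\, u_1 \dots u_k$. Since the statement concerns only the shape of raw terms, typing plays no role: the induction runs on the term grammar alone, and the spine entries $u_1,\dots,u_k$ need not themselves be analysed (they are merely recorded as proof terms, first-order terms, or elimination markers), so the induction hypothesis is only ever applied to the principal subterm. I would first dispatch the easy cases. If $t$ is a variable or one of the constants $\D$, $\mathsf{efq}_P$, I take $n=k=0$ and $r=t$; the same choice works when $t$ is one of the introductions $\pair{u_0}{u_1}$, $\inj{i}{u}$ or $\enc{m}{u}$, each of which is admitted as a head. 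For an abstraction $t=\lambda x\, s$ or $t=\lambda\alpha\, s$ I apply the induction hypothesis to $s$, write $s=\lambda z_1 \dots \lambda z_n.\, r\, u_1 \dots u_k$, and prepend the newly bound variable, obtaining a head form with one more leading abstraction.

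The eliminations are the only cases requiring thought, and they are all handled in the same way. Take an application $t=s\, v$; the cases $t=s\,m$ with $m$ a first-order term, $t=\proj{i}{s}$, $t=\case{s}{x.w_1}{y.w_2}$, $t=\dest{s}{x}{w}$ and $t=\efq{P}{s}$ differ only in the nature of the trailing argument. I apply the induction hypothesis to the principal subterm $s$ and distinguish whether $s$ is syntactically a $\lambda$-abstraction. If it is not, its head form carries no leading abstractions, $s=r\, u_1 \dots u_k$, and I extend the spine by one position to get $t=r\, u_1 \dots u_k\, v$: here $v$ is a proof term, $m$ a first-order term, and $\proj{i}{}$, $\case{}{x.w_1}{y.w_2}$, $\dest{}{x}{w}$ are precisely the elimination expressions allowed in a spine, so the result is again in head form. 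If instead $s$ is a $\lambda$-abstraction, then $s$ is itself an introduction term and therefore a legitimate head, so I take $n=0$, $r=s$, and the one-element spine formed by the trailing argument.

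The single subtle point—and the only thing worth stating explicitly—is this case distinction on the functional subterm of an elimination: a $\lambda$-abstraction in functional position must be absorbed as the head $r$, rather than having its leading abstractions pulled out, whereas any other subterm contributes its existing spine, to which the new argument is appended. Equivalently, the induction maintains the invariant that a term's head form begins with leading abstractions exactly when its outermost constructor is a $\lambda$. No reduction, typing, or permutation argument is needed, which is why the characterisation is standard folklore.
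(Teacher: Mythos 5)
Your structural induction is correct and is precisely the ``standard'' argument that the paper invokes without writing out (its proof of this proposition consists of the single word \emph{Standard}), including the key observation that an elimination either extends the spine of a non-abstraction head or takes a $\lambda$-abstraction as the head $r$ itself. The only bookkeeping slip is $\efq{P}{u}$, which you list both among the constants and among the eliminations: in the latter role it has no trailing elimination marker in the allowed spine vocabulary, so the clean reading is $n=0$, head $r=\mathsf{efq}_{P}$ (a constant) and one-element spine consisting of the proof term $u$ --- which is exactly how the case $r=\efq{P}{}$ is used later in the proof of \Cref{thm:construct-il-mp}.
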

\begin{proof}
Standard. 
\end{proof}
We can now prove that $\CD$ is constructive.

\begin{theorem}[Disjunction and Witness Property for  $\CD$]
\label{thm:construct-il-mp}\mbox{}
\begin{enumerate}
\item
If $\CD \vdash t: \exists \alpha \, A$, and $t$ is in normal form, then $t=\enc{m}{u}$ and $\CD \vdash u: A[m/\alpha]$.
\item If $\CD \vdash t: A \lor B$ and $t$ is in normal form, then either $t=\inj{0}{u}$ and $\CD \vdash u: A$ or $t=\inj{1}{u}$ and $\CD\vdash u: B$.
\item If $\CD \vdash t: \forall \alpha \, A$, and $t$ is in normal form, then $t=\lambda \alpha\, {u}$ and $\CD \vdash u: A$.
\end{enumerate}
\end{theorem}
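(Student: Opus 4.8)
The plan is to read the shape of $t$ directly off the syntactic characterization of \Cref{theorem:head-form}, writing $t = \lambda z_1 \dots \lambda z_n.\, r\, u_1 \dots u_k$, and to prove the three items \emph{simultaneously} by induction on the size of $t$. The hypothesis $\CD \vdash t : C$ means that $t$ has no free proof-term variables, a fact I will use repeatedly. Note first that, since $t$ is normal, whenever the head $r$ is itself a $\lambda$-abstraction we must have $k=0$ (otherwise $r\,u_1$ would be a $\beta$-redex); hence I may assume all leading abstractions have already been collected into $\lambda z_1 \dots \lambda z_n$, so that $r$ is a variable, a constant ($\D^{I}$ or an ex-falso term $\efq{P}{u}$), or a non-$\lambda$ introduction $\pair{t_1}{t_2}$, $\inj{i}{w}$ or $\enc{m}{w}$. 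The whole argument is then a case analysis on $n$ and on $r$, with normality forbidding any redex to occur as a subterm.

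Assuming for the moment that the head is not $\D$, the cases are routine. If $C$ is $\exists \alpha\, A$ or $A \lor B$, then $C$ is neither an implication nor a universal formula, so no leading abstraction can occur (a $\lambda x$ forces an arrow type and a $\lambda \alpha$ forces a universal type); hence $n=0$ and $t = r\, u_1 \dots u_k$. The head $r$ cannot be a proof-term variable, since with $n=0$ it would be free, against closedness; it cannot be $\efq{P}{u}$, whose type is the atomic $P$ and which therefore admits no argument and has the wrong type. If $r$ is a non-$\lambda$ introduction, then $k=0$: otherwise $r\,u_1$ would be a projection, case, or existential-elimination redex (or be ill-typed), against normality. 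With $k=0$ the type $C$ selects the introduction, and the corresponding typing rule yields the conclusion: for $\exists \alpha\, A$ one gets $t=\enc{m}{u}$ with $u : A[m/\alpha]$, and for $A\lor B$ one gets $t=\inj{0}{u}$ with $u:A$ or $t=\inj{1}{u}$ with $u:B$. For item 3, where $C=\forall \alpha\, A$, the same analysis shows that $n=0$ is impossible, since no non-$\lambda$ head can produce a universal type without creating a redex; thus $n\ge 1$, the leading binder cannot be a proof variable (that would give an arrow type), and so $t=\lambda \alpha\, u$ with $u:A$.

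The only remaining, and genuinely delicate, case is $r=\D$, and this is precisely where the three statements must be proved together. Suppose $t = \D\, u_1 \dots u_k$ is a closed normal term whose type is one of the three above. Since $\D : \forall \alpha (A'\lor B')\rightarrow \forall \alpha\, A'\lor B'$, the argument $u_1$ is a normal term with no free proof-term variables, of type $\forall \alpha (A'\lor B')$, and a proper subterm of $t$; so by the induction hypothesis for item 3 we get $u_1=\lambda \alpha\, w$ with $w : A'\lor B'$. Applying the induction hypothesis for item 2 to the strictly smaller $w$ (still free of proof-term variables, as $\lambda\alpha$ binds only a first-order variable) yields $w=\inj{i}{v}$. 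Hence $u_1=\lambda \alpha\, \inj{i}{v}$, so $\D\, u_1$ is exactly the left-hand side of one of the $\D$-reduction rules of \Cref{fig:red-ilmp} and is therefore a redex, contradicting the normality of $t$; this holds for every $k\ge 1$, while $k=0$ gives $t=\D$ of arrow type, excluded by the shape of $C$. Thus the head can never be $\D$, which closes all three cases. I expect this $\D$-headed case to be the main obstacle: it is the point at which merely inspecting the head form is insufficient, and one is forced to feed the disjunction and universal properties of the subterms back into the argument in order to exhibit the forbidden redex.
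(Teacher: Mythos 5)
Your proof is correct and takes essentially the same route as the paper's: a simultaneous induction reading the shape of $t$ off \Cref{theorem:head-form}, with the crucial $\D$-headed case dispatched exactly as in the paper by applying induction hypotheses 3 and then 2 to the first argument of $\D$ so as to expose a forbidden redex $\D\,(\lambda\alpha\,\inj{i}{v})$. The only (harmless) divergence is that you exclude an $\efq{P}{}$ head by a type-mismatch argument, where the paper instead appeals to consistency of the logic to rule out a closed proof of $\bot$.
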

\begin{proof}\mbox{}
We prove 1, 2, 3 simultaneously by induction on $t$. If $t=\lambda \alpha\, u$, we are done; assume then it is not. By \Cref{theorem:head-form}, $t=r\, u_1\dots u_k$. Let us explore the possible forms of $r$.
  \begin{itemize}
  \item Since $t$ is closed, $r$ cannot be a variable.
  \item For the sake of contradiction, suppose $r=\D$. Then $\CD \vdash u_{1}: \forall\alpha (A\lor B)$, with $\alpha$ not in $B$. By induction hypotheses 2 and 3, we have $u_{1}=\lambda \alpha\, \inj{i}{u}$, with $i\in\{0,1\}$. Thus, 
  $t=\D\, (\lambda \alpha\, \inj{i}{u})\, u_{2}\ldots u_{k}$, which is not a normal form.
  
  \item  It cannot  be   $r=\efq{P}{}$ either, otherwise  $\CD \vdash u_1 : \bot$, which is impossible, by consistency of the logic.
  \item The only remaining possibility is that $r$ is one among $\lambda x\, t$, $\lambda \alpha\, t$, $\pair{t_1}{t_2}$, $\inj{i}{t}$, $\enc{m}{t}$. In this case, $k$ must be 0 as otherwise we would have a redex. This means that $t=r$. 
Therefore,  if  $\CD\vdash t: \exists \alpha\, A$, then $t=\enc{m}{u}$ with $\CD \vdash u : A(m)$. If $\CD\vdash t: A \lor B$, then either $t=\inj{0}{u}$ and $\CD \vdash u: A$ or $t=\inj{1}{u}$ and $\CD\vdash u: B$. 
  \end{itemize}
\end{proof}

In order to derive strong normalization for $\CD$, we are now going to define a simple translation mapping proof terms of  $\CD$ into terms of  the intuitionistic logic calculus $\IL$ extended with a constant for falsity. The trick allows us to use  in the translation some dummy terms that will never be involved in the reduction rules, but help the translation to preserve the type. 
\begin{definition}[$\ILB$, Translation of $\CD$ into $\ILB$]\mbox{}
\begin{enumerate}
\item
Let $\ILB$ the system obtained from $\CD$ by removing the constant domain axiom and adding a rule
$$\mathsf{F}: \bot$$ 
where $\mathsf{F}$ is a constant symbol. Moreover, for every $A$, there is a proof term of type $\bot\rightarrow A$, thus there is some dummy closed term of type $A$ in $\ILB$: we denote it with $\mathsf{d}^{A}$.
\item We define a translation $\trans{\_}: \CD\rightarrow \ILB$, leaving types unchanged. 
For terms $\D^{I}: I$, where $I=\forall\alpha (A\lor B)\rightarrow \forall \alpha A \lor B$ is an instance of the constant domain axiom, we define:
$$\trans{(\D^{I})}:= \lambda f^{\footnotesize{\forall\alpha (A\lor B)}}\, f\,\dum\, [z^{\footnotesize{A[\dum/\alpha]}}.\, \inj{0}{\lambda \alpha\, f\, \alpha\, [x^{\footnotesize{A}}.\, x, y^{\footnotesize{B}}.\, \mathsf{d}^{\footnotesize{A}}]},\, z^{\footnotesize{B}}.\,\inj{1}{z}]$$
and by construction $\ILB\vdash \trans{(\D^{I})}: I$. For all other proof terms $t$ of  $\CD$, we set
 $\trans{t}$ as the term of $\ILB$ obtained from $t$ by replacing all its constants $\D^{I}$ with $\trans{(\D^{I})}$.
 \end{enumerate}
\end{definition}

As a matter of fact, each reduction step between $\CD$ terms corresponds to \emph{at least} a step between their translations. With $\mapsto^{+}$ we denote the transitive closure of $\mapsto$. 

\begin{proposition}[Preservation of the Reduction Relation]\label{proposition-preservation}
Let $v$ be any term of $\CD$. Then 
$$v\mapsto w\Rightarrow \trans{v}\mapsto^{+}\trans{w}$$
\end{proposition}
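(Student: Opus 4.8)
The plan is to prove the statement by induction on the structure of the term $v$, with the essential case being the analysis of the single reduction step $v \mapsto w$. Since $\mapsto$ is a congruence (a reduction applied anywhere inside a term), the inductive cases where the redex lies inside a proper subterm reduce immediately to the inductive hypothesis: if $v = C[v']$ with $v' \mapsto w'$ and $\trans{v'} \mapsto^+ \trans{w'}$, then because $\trans{\_}$ is defined compositionally (it merely replaces each $\D^I$ by $\trans{(\D^I)}$ and commutes with every term constructor), we get $\trans{v} = \trans{C}[\trans{v'}] \mapsto^+ \trans{C}[\trans{w'}] = \trans{w}$. So the real content is the base case: checking that whenever $v \mapsto w$ is a top-level application of one of the reduction rules in \cref{fig:red-ilmp}, we have $\trans{v} \mapsto^+ \trans{w}$.

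For the ordinary $\lambda$-calculus reductions (beta, projection, case, existential destructor, and first-order beta), the translation commutes with substitution and with every constructor, so each such step maps to exactly one identical step between translations; here a single $\mapsto$ suffices and the verification is routine. The genuinely interesting base cases are the two reductions governing the constant-domain constant $\D$. First I would handle $\D\,(\lambda\alpha\,\inj{0}{u}) \mapsto \inj{0}{\lambda\alpha\,u}$. On the left, $\trans{\_}$ turns $\D$ into the explicit term $\trans{(\D^I)} = \lambda f.\, f\,\dum\,[z.\,\inj{0}{\lambda\alpha\,f\,\alpha\,[x.x,\,y.\mathsf{d}^A]},\,z.\inj{1}{z}]$, and its argument becomes $\lambda\alpha\,\inj{0}{\trans{u}}$. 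I would then trace the reductions: the outer beta-redex fires, substituting $\lambda\alpha\,\inj{0}{\trans{u}}$ for $f$; then $f\,\dum$ becomes $(\lambda\alpha\,\inj{0}{\trans{u}})\,\dum \mapsto \inj{0}{\trans{u}[\dum/\alpha]}$, which selects the first branch of the outer case analysis; inside that branch each occurrence $f\,\alpha$ becomes $\inj{0}{\trans{u}}$ which selects the $x.x$ branch, leaving $\trans{u}$. The net result reduces in several $\mapsto$ steps to $\inj{0}{\lambda\alpha\,\trans{u}} = \trans{\inj{0}{\lambda\alpha\,u}}$, as required. The second $\D$-rule, $\D\,(\lambda\alpha\,\inj{1}{u}) \mapsto \inj{1}{u[\dum/\alpha]}$, is analogous but simpler: after the outer beta and the reduction of $f\,\dum = (\lambda\alpha\,\inj{1}{\trans{u}})\,\dum \mapsto \inj{1}{\trans{u}[\dum/\alpha]}$, this selects the second case branch $z.\inj{1}{z}$, yielding $\inj{1}{\trans{u}[\dum/\alpha]} = \trans{\inj{1}{u[\dum/\alpha]}}$.

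The main obstacle I anticipate is purely bookkeeping rather than conceptual: I must confirm that the several internal reductions really fire in sequence without getting stuck, and in particular that the number of reduction steps is at least one, so that $\mapsto^+$ (rather than mere reflexive-transitive closure) is justified — this is crucial because the whole point of the proposition is to transfer strong normalization from $\ILB$ back to $\CD$, and a zero-step translation would break that argument. Since each $\D$-reduction clearly induces at least the outer beta-redex on the left, this lower bound holds. A secondary subtlety worth verifying is that $\trans{\_}$ genuinely commutes with first-order substitution $[\dum/\alpha]$ and with the side conditions on $\alpha$ (notably that $\alpha$ does not occur in $B$, so the translated term is well-typed and the $\dum$-substitutions land only where intended); these follow directly from the compositional definition of the translation and the typing already established in the definition of $\ILB$.
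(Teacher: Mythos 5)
Your proposal is correct and follows essentially the same route as the paper: reduce to the case where the contracted redex is at the top level (the congruence/compositionality argument you spell out is left implicit in the paper's ``it suffices to consider a redex''), observe that the intuitionistic rules translate to single identical steps, and for the two $\D$-rules trace the translated term through the outer beta, the reduction of $f\,\dum$, the case selection, and (in the $\inj{0}$ case) the inner $f\,\alpha$ and $[x.x,\,y.\mathsf{d}]$ reductions, arriving at $\inj{0}{\lambda\alpha\,\trans{u}}$ and $\inj{1}{\trans{u}[\dum/\alpha]}$ respectively. Your explicit remark that at least one step fires (so $\mapsto^{+}$ is genuinely obtained) is exactly the point that makes the strong-normalization transfer work, and it holds as you say.
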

\begin{proof} It is sufficient to prove the proposition when $v$ is a redex $r$. We have several possibilities:
\begin{enumerate}
\item $r=(\lambda x u)t\mapsto u[t/x]$. 
We verify indeed that $$ \trans{((\lambda x u)t)}=(\lambda x \trans{u})\trans{t}\mapsto \trans{u}[\trans{t}/x]= \trans{u[t/x]}$$
\item $r=\pair{u_{0}}{u_{1}}\pi_{i}\mapsto u_{i}$. 
We verify indeed that 
$$\trans{(\pair{u_{0}}{u_{1}}\pi_{i})}=\pair{\trans{u}_{0}}{\trans{u}_{1}}\pi_{i}\mapsto \trans{u}_{i}$$
\item $r=\inj{i}{u}\, [x_{1}. v_{1}, x_{2}, v_{2}]\mapsto v_{i}[u/x_{i}]$. We verify indeed that 
$$\trans{(\inj{i}{u}\, [x_{1}. v_{1}, x_{2}, v_{2}])}=\inj{i}{\trans{u}}\, [x_{1}. \trans{v_{1}}, x_{2}, \trans{v_{2}}]\mapsto \trans{v_{i}}[\trans{u}/x_{i}]=\trans{(v_{i}[u/x_{i}])}$$
\item The other intuitionistic reductions are analogous. 

\item $r=\D\, (\lambda \alpha\, \inj{0}{u})\mapsto \inj{0}{\lambda \alpha \, u}$. We verify indeed that 
\[\begin{aligned}\trans{(\D\, (\lambda \alpha\, \inj{0}{u}))}=&\left(\lambda f\, f\,\dum\, [z.\, \inj{0}{\lambda \alpha\, f\, \alpha\, [x.\, x, y.\, \mathsf{d}]},\, z.\,\inj{1}{z}]\right)\, (\lambda \alpha\, \inj{0}{\trans{u}})\\
&\mapsto (\lambda \alpha\, \inj{0}{\trans{u}})\,\dum\, [z.\, \inj{0}{\lambda \alpha\, (\lambda \alpha\, \inj{0}{\trans{u}})\, \alpha\, [x.\, x, y.\, \mathsf{d}]},\, z.\,\inj{1}{z}]\\
&\mapsto \inj{0}{\trans{u}[\dum/\alpha]}\, [z.\, \inj{0}{\lambda \alpha\, (\lambda \alpha\, \inj{0}{\trans{u}})\, \alpha\, [x.\, x, y.\, \mathsf{d}]},\, z.\,\inj{1}{z}]\\
&\mapsto \inj{0}{\lambda \alpha\, (\lambda \alpha\, \inj{0}{\trans{u}})\, \alpha\, [x.\, x, y.\, \mathsf{d}]}\\
&\mapsto \inj{0}{\lambda \alpha\,  \inj{0}{\trans{u}}\, [x.\, x, y.\, \mathsf{d}]}\\
&= \inj{0}{\lambda \alpha\,\trans{u}}= \trans{(\inj{0}{\lambda \alpha\, u})}
\end{aligned}
\]
\item $r=\D\, (\lambda \alpha\, \inj{1}{u})\mapsto \inj{1}{u[\dum/\alpha]}$. We verify indeed that 
\[\begin{aligned}\trans{(\D\, (\lambda \alpha\, \inj{1}{u}))}&=\left(\lambda f\, f\,\dum\, [z.\, \inj{0}{\lambda \alpha\, f\, \alpha\, [x.\, x, y.\, \mathsf{d}]},\, z.\,\inj{1}{z}]\right)\, (\lambda \alpha\, \inj{1}{\trans{u}})\\
&\mapsto (\lambda \alpha\, \inj{1}{\trans{u}})\,\dum\, [z.\, \inj{0}{\lambda \alpha\, (\lambda \alpha\, \inj{1}{\trans{u}})\, \alpha\, [x.\, x, y.\, \mathsf{d}]},\, z.\,\inj{1}{z}]\\
&\mapsto \inj{1}{\trans{u}[\dum/\alpha]}\, [z.\, \inj{0}{\lambda \alpha\, (\lambda \alpha\, \inj{0}{\trans{u}})\, \alpha\, [x.\, x, y.\, \mathsf{d}]},\, z.\,\inj{1}{z}]\\
&\mapsto \inj{1}{\trans{u}[\dum/\alpha]}\\
&=\trans{(\inj{1}{u[\dum/\alpha]})}
\end{aligned}
\]

\end{enumerate}
\end{proof}
Strong normalization theorem for $\CD$ easily follows from Proposition \ref{proposition-preservation}.
\begin{theorem}[Strong Normalization  for $\CD$]
Suppose $t: A$ in $\CD$. Then $t$ is strongly normalizable with respect to the relation $\mapsto$.
\end{theorem}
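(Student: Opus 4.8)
The plan is to derive Strong Normalization for $\CD$ as an immediate consequence of \Cref{proposition-preservation}, reducing it to the already-known strong normalization of the intuitionistic target system $\ILB$. The strategy is the standard \emph{reduction-preserving translation} argument: since every single reduction step in $\CD$ is mirrored by at least one step in $\ILB$ (via $\mapsto^+$), any infinite reduction sequence upstairs would project to an infinite reduction sequence downstairs, contradicting normalization there.

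Concretely, I would argue by contradiction. Suppose $t:A$ in $\CD$ is \emph{not} strongly normalizable. Then there exists an infinite reduction sequence
\[ t = v_0 \mapsto v_1 \mapsto v_2 \mapsto \cdots \]
Applying \Cref{proposition-preservation} to each step $v_i \mapsto v_{i+1}$ yields $\trans{v_i} \mapsto^+ \trans{v_{i+1}}$, i.e. a nonempty finite chain of $\mapsto$-steps in $\ILB$. Concatenating all these chains produces an infinite $\mapsto$-reduction sequence starting from $\trans{t}$ in $\ILB$. The key point making the concatenation genuinely infinite is that each $\trans{v_i}\mapsto^+\trans{v_{i+1}}$ contributes \emph{at least one} actual reduction step (this is exactly what the $+$ in $\mapsto^+$ guarantees), so the combined sequence cannot terminate.

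It therefore remains only to invoke strong normalization of $\ILB$ itself. Since $\ILB$ is just the Curry--Howard calculus for intuitionistic first-order logic augmented with an inert constant $\mathsf{F}:\bot$ (which carries no reduction rule and hence creates no new redexes), its strong normalization is the classical result for the simply-typed/intuitionistic natural deduction calculus, available in \cite{Sorensen, Prawitz}. The existence of an infinite reduction from $\trans{t}$ thus contradicts $\ILB$ being strongly normalizing, and the assumption that $t$ is not strongly normalizable must fail.

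I expect no genuine obstacle here: the real mathematical content lives in \Cref{proposition-preservation}, whose delicate cases for the $\D$-reductions have already been verified. The only point demanding a line of care is confirming that the constant $\mathsf{F}$ adds no reductions and leaves the standard normalization proof for intuitionistic logic applicable to $\ILB$; this is routine, as $\mathsf{F}$ occurs only inside the closed dummy terms $\mathsf{d}^A$, which by design are never the active part of any redex.
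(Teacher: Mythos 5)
Your proposal is correct and follows exactly the paper's own argument: lift an alleged infinite reduction sequence through the translation of \Cref{proposition-preservation} into an infinite reduction in $\ILB$, and conclude by the classical strong normalization theorem for intuitionistic natural deduction from \cite{Sorensen}. The extra remarks you add (that $\mapsto^{+}$ guarantees each projected segment is nonempty, and that the constant $\mathsf{F}$ is inert and so does not disturb the normalization proof for $\ILB$) are sound and merely make explicit what the paper leaves implicit.
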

\begin{proof} By Proposition \ref{proposition-preservation}, any infinite reduction $t=t_{1}, t_{2}, \ldots, t_{n}, \ldots $ in System $\CD$ gives rise to an infinite reduction $\trans{t}=\trans{t}_{1}, \trans{t}_{2}, \ldots, \trans{t}_{n}, \ldots$ in  $\ILB$.  By the strong normalization Theorem for $\IL$ (see \cite{Sorensen}), infinite reductions of the  latter kind cannot occur; thus neither of the former. 
\end{proof}

\section{The Constant Domain Axiom in Arithmetic}
\label{sec:arithmetic}

 Many mathematical axioms and concepts preserve the computational properties of intuitionistic logic, when added on top of it. Thus not only intuitionistic logic  embodies constructive logical reasoning, it also serves as a foundation to constructive mathematics. Since we showed that the intuitionistic logic of constant domains is constructive too, the natural question is whether it can be a basis for constructive mathematics as well. 
 
 The answer is no. When extended with the standard axiom scheme for induction on natural numbers, $\CD$ does not have the disjunction property and, as a consequence, the existential property does not hold either. Showing it is the goal of this section.
 
 We start by introducing the system $\HA+\CD$ which is just the standard Heyting intuitionistic Arithmetic (see \cite{Sorensen}), augmented with the constant domain axiom. 
 
 \begin{definition}[Language of $\HA + \CD$]\label{definition-languagear}
  The language $\mathcal{L}$ of $\HA + \CD$ is defined as follows.
  \begin{enumerate}
  \item The terms of $\mathcal{L}$ are inductively defined as either variables $\alpha, \beta,\ldots$ or $\dum$ or $\suc(t)$ with $t\in\mathcal{L}$. A numeral is a term of the form $\suc\ldots \suc \dum$. 
  \item There is one symbol $\mathcal{P}$ for every primitive recursive relation over $\mathbb{N}$; with $\mathcal{P}^{\bot}$ we denote the symbol for the complement of the relation denoted by $\mathcal{P}$. The atomic formulas of $\mathcal{L}$ are all the expressions of the form $\mathcal{P}(t_{1}, \ldots, t_{n})$ such that $t_{1}, \ldots, t_{n}$ are terms of $\mathcal{L}$ and $n$ is the arity of $\mathcal{P}$. Atomic formulas will also be denoted as $\prop{P}, \mathsf{Q}, \prop{P_i}, \ldots$ and $\mathcal{P}(t_{1}, \ldots, t_{n})^{\bot}:=\mathcal{P}^{\bot}(t_{1}, \ldots, t_{n})$. 
\item The formulas of $\mathcal{L}$ are built from atomic formulas of $\mathcal{L}$ by the connectives $\lor,\land,\rightarrow, \forall,\exists$ as usual.\\
\end{enumerate}
\end{definition}

 \begin{definition}[$\HA+\CD$]
We define $\HA+\CD$ to be the system obtained from $\CD$ fixing $\Language$ as formula language and adding the induction rule:

$$\begin{array}{c} \Gamma\vdash u: A(\dum)\ \ \ \ \  \Gamma\vdash v:\forall \alpha.\, A(\alpha)\rightarrow A(\suc(\alpha))\\ \hline \Gamma\vdash \rec\, u\,v\,m : A[m/\alpha] \end{array}$$
together with defining axioms for primitive recursive relation.
\end{definition}

\begin{theorem}
$\HA+\CD$ does not have the disjunction and the witness properties.
\end{theorem}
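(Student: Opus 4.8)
The plan is to expose the constant domain axiom as the sole culprit: combined with the decidability of primitive recursive predicates, it forces $\HA + \CD$ to prove instances of the excluded middle over decidable formulas, and then a single independent $\Pi^{0}_{1}$ sentence witnesses the collapse of both properties. Concretely, I would fix a primitive recursive predicate $\mathcal{P}$ and show that $\HA + \CD \vdash \forall \alpha\, \mathcal{P}(\alpha) \lor \exists \beta\, \mathcal{P}^{\bot}(\beta)$ while neither disjunct is provable. This already refutes the disjunction property, and a routine encoding turns it into a refutation of the witness property.

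First I would derive the decidable excluded middle inside $\HA + \CD$. Since $\mathcal{P}$ is primitive recursive, the defining axioms give $\HA \vdash \forall \alpha\,(\mathcal{P}(\alpha) \lor \mathcal{P}^{\bot}(\alpha))$. Weakening the right disjunct via $\mathcal{P}^{\bot}(\alpha) \rightarrow \exists \beta\, \mathcal{P}^{\bot}(\beta)$ --- where the existential no longer contains $\alpha$ --- yields $\forall \alpha\,(\mathcal{P}(\alpha) \lor \exists \beta\, \mathcal{P}^{\bot}(\beta))$. The instance of the constant domain axiom with $B = \exists \beta\, \mathcal{P}^{\bot}(\beta)$, legitimate since $\alpha$ does not occur in $B$, then gives $\forall \alpha\, \mathcal{P}(\alpha) \lor \exists \beta\, \mathcal{P}^{\bot}(\beta)$ by modus ponens. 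Note that $\HA$ alone could never prove this, since it enjoys the disjunction property; it is exactly $\CD$ that manufactures the non-constructive disjunction.

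Next I would choose $\mathcal{P}$ so that both disjuncts fail. I would take $\forall \alpha\, \mathcal{P}(\alpha)$ to be the G\"odel sentence of $\HA + \CD$, a $\Pi^{0}_{1}$ formula with $\mathcal{P}$ primitive recursive. Because every instance of the constant domain axiom is classically valid, $\HA + \CD$ is a subtheory of classical Peano Arithmetic $\mathsf{PA}$, which is sound for the standard model $\mathbb{N}$; hence $\HA + \CD$ is consistent and sound. By G\"odel's first incompleteness theorem, consistency gives $\HA + \CD \not\vdash \forall \alpha\, \mathcal{P}(\alpha)$, and the usual argument shows this sentence is true in $\mathbb{N}$. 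Truth of $\forall \alpha\, \mathcal{P}(\alpha)$ makes $\exists \beta\, \mathcal{P}^{\bot}(\beta)$ false in $\mathbb{N}$, so soundness gives $\HA + \CD \not\vdash \exists \beta\, \mathcal{P}^{\bot}(\beta)$. Thus $\HA + \CD$ proves the disjunction but neither disjunct, refuting the disjunction property.

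Finally, for the witness property I would repackage the disjunction as an existential statement. Writing $\mathsf{Z}(n)$ for the decidable predicate ``$n = \dum$'' with complement $\mathsf{Z}^{\bot}(n)$, I would consider $\exists n\,\big((\mathsf{Z}(n) \rightarrow \forall \alpha\, \mathcal{P}(\alpha)) \land (\mathsf{Z}^{\bot}(n) \rightarrow \exists \beta\, \mathcal{P}^{\bot}(\beta))\big)$. From the provable disjunction, case analysis supplies the witness $\dum$ in the left case and $\suc(\dum)$ in the right case, so the existential is provable. But a proof of the body for a numeral $m$ would, by modus ponens with the provable fact $\mathsf{Z}(m)$ or $\mathsf{Z}^{\bot}(m)$, yield a proof of $\forall \alpha\, \mathcal{P}(\alpha)$ (if $m = \dum$) or of $\exists \beta\, \mathcal{P}^{\bot}(\beta)$ (if $m \neq \dum$), both impossible; so no witnessing numeral exists. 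I expect the only real obstacle to be conceptual rather than technical: one must locate an independent statement of exactly the right logical complexity --- a true but unprovable $\Pi^{0}_{1}$ sentence --- so that the $\forall$-disjunct escapes provability by incompleteness while the $\exists$-disjunct escapes it by soundness. The arithmetical bookkeeping, namely decidability of primitive recursive predicates, the $\mathsf{PA}$-soundness of $\HA + \CD$, and the encoding into an existential, is entirely standard.
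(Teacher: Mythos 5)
Your proof is correct, but it reaches the contradiction by a genuinely different route than the paper. Both arguments begin identically: decidability of a primitive recursive predicate gives $\forall \alpha\,(\mathcal{P}(\alpha) \lor \mathcal{P}^{\bot}(\alpha))$ in $\HA$, weakening the right disjunct to an $\alpha$-free existential sets up a legal instance of the constant domain axiom, and modus ponens produces a disjunction $\forall \alpha\, \mathcal{P}(\alpha) \lor \exists \beta\, \mathcal{P}^{\bot}(\beta)$ that intuitionistic arithmetic alone could never prove; the coding of disjunctions as existentials to transfer the failure to the witness property is also the same. Where you diverge is in showing that neither disjunct is provable. The paper keeps the Turing machine indices $x, y$ as parameters, instantiates them by arbitrary numerals, and argues that the disjunction property would yield a recursive decision procedure for the Halting Problem --- a uniform, computability-theoretic contradiction that never names a single bad instance. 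You instead fix one independent $\Pi^{0}_{1}$ sentence (the G\"odel sentence of $\HA+\CD$) and kill the left disjunct by incompleteness and the right one by soundness over $\mathbb{N}$. Your version buys an explicit counterexample --- a single provable disjunction with neither disjunct provable --- and is more careful about the soundness of $\HA+\CD$ (via its inclusion in $\mathsf{PA}$), a point the paper's halting-problem argument also silently needs in order for the extracted decision procedure to give correct answers. The paper's version buys independence from G\"odel's theorem, resting only on Church--Turing undecidability. Both are complete arguments; neither has a gap.
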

\begin{proof}
Suppose for the sake of contradiction that $\HA+\CD$ has the disjunction property. Let us consider the Kleene primitive recursive predicate $\mathcal{T}(x, y, z)$, which holds if and only if $z$ codes a terminating computation for the Turing machine $x$ on input $y$ \cite{Odi}. Then
$$\HA+\CD \vdash\forall \alpha\,( \lnot\mathcal{T}(x, y, \alpha) \lor \exists \beta\, \mathcal{T}(x, y, \beta) )$$ 
because the excluded middle holds for recursive relations and namely for $\mathcal{T}$. 
Since the formula $$\forall\alpha\, (\lnot\mathcal{T}(x, y, \alpha) \lor \exists \beta\, \mathcal{T}(x, y, \beta)) \rightarrow \forall\alpha\, \lnot\mathcal{T}(x, y, \alpha) \lor \exists \beta\, \mathcal{T}(x, y, \beta) $$
is an instance of the constant domain axiom, we obtain
$$\CD\vdash \forall\alpha\, \lnot\mathcal{T}(x, y, \alpha) \lor \exists \beta\, \mathcal{T}(x, y, \beta)$$
Since by hypothesis $\CD$  has the disjunction property, for every pair of numerals $n, m$, either $\CD\vdash \forall\alpha\,\lnot\mathcal{T}(n, m, \alpha)$ or $\CD\vdash \exists \beta\, \mathcal{T}(n, m, \beta)$. We thus have at disposal a recursive procedure for solving the Halting Problem, which is a contradiction. 

Finally, disjunctions $A\lor B$ can be coded as $\exists \alpha.\, (\alpha=\dum \rightarrow  A) \land (\alpha=\suc(\dum) \rightarrow  B)$ and therefore if $\HA+\CD$ had the witness property, it would have the disjunction property, again a contradiction.
\end{proof}

\end{document}